\DeclarePairedDelimiter\floor{\lfloor}{\rfloor}
\author{Tuomas Orponen and Laura Venieri}
\title{A note on expansion in prime fields}
\address{University of Helsinki, Department of Mathematics and Statistics}
\email{tuomas.orponen@helsinki.fi}
\email{laura.venieri@helsinki.fi}
\subjclass[2010]{11B30 (Primary) 11B13 (Secondary)}
\thanks{T.O. and L.V. are supported by the Academy of Finland via the project \emph{Quantitative rectifiability in Euclidean and non-Euclidean spaces}, grant nos. $309365$ and $314172$}
\newcommand{\R}{\mathbb{R}}
\newcommand{\N}{\mathbb{N}}
\newcommand{\Z}{\mathbb{Z}}
\newcommand{\calL}{\mathcal{L}}
\newcommand{\calI}{\mathcal{I}}
\newcommand{\Hd}{\dim_{\mathrm{H}}}
\numberwithin{equation}{section}
\theoremstyle{plain}
\newtheorem{thm}[equation]{Theorem}
\newtheorem{conjecture}[equation]{Conjecture}
\newtheorem{lemma}[equation]{Lemma}
\newtheorem{ex}[equation]{Example}
\newtheorem{proposition}[equation]{Proposition}
\newtheorem{question}{Question}
\theoremstyle{definition}
\theoremstyle{remark}
\begin{document}

\begin{abstract} Let $\beta,\epsilon \in (0,1]$, and $k \geq \exp(122 \max\{1/\beta,1/\epsilon\})$. We prove that if $A,B$ are subsets of a prime field $\Z_{p}$, and $|B| \geq p^{\beta}$, then there exists a sum of the form
\begin{displaymath} S = a_{1}B \pm \ldots \pm a_{k}B, \qquad a_{1},\ldots,a_{k} \in A, \end{displaymath}
with $|S| \geq 2^{-12}p^{-\epsilon}\min\{|A||B|,p\}$.

As a corollary, we obtain an elementary proof of the following sum-product estimate. For every $\alpha < 1$ and $\beta,\delta > 0$, there exists $\epsilon > 0$ such that the following holds. If $A,B,E \subset \Z_{p}$ satisfy $|A| \leq p^{\alpha}$, $|B| \geq p^{\beta}$, and $|B||E| \geq p^{\delta}|A|$, then there exists $t \in E$ such that
\begin{displaymath} |A + tB| \geq c p^{\epsilon}|A|, \end{displaymath}
for some absolute constant $c > 0$. A sharper estimate, based on the polynomial method, follows from recent work of Stevens and de Zeeuw.
\end{abstract}

\maketitle

\section{Introduction}

The work in this note was motivated by the following problem in fractal geometry:
\begin{conjecture}\label{mainConjecture} Let $\alpha,\beta,\delta \in (0,1)$. Assume that $A,B,E \subset [0,1]$ are compact sets with $\dim A \leq \alpha$, $\dim B \geq \beta$ and $\dim B + \dim E \geq \dim A + \delta$. Then, there exists $t \in E$ such that
\begin{displaymath} \dim (A + tB) \geq \dim A + \epsilon \end{displaymath}
for some $\epsilon > 0$ depending only on $\alpha,\beta,\delta$.
\end{conjecture}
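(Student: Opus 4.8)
The natural plan is to discretize the sets at a small scale $r = 1/p$, with $p$ prime, and to transfer the problem to the prime field $\Z_{p}$, where the sum-product corollary of the abstract applies directly. First I would fix a Frostman measure $\mu_{B}$ on $B$ satisfying $\mu_{B}(B(x,r)) \lesssim r^{\beta}$, together with analogous measures $\mu_{A},\mu_{E}$ on $A,E$ realising their dimensions; at scale $r$ these force the $r$-covering numbers to satisfy $N(A,r) \lesssim r^{-\dim A}$, $N(B,r) \gtrsim r^{-\beta}$, and $N(E,r) \gtrsim r^{-\dim E}$, up to harmless logarithmic losses. Identifying the $r$-neighbourhoods with subsets $A_{r},B_{r},E_{r} \subset \Z_{p}$ of the corresponding cardinalities, the hypothesis $\dim B + \dim E \geq \dim A + \delta$ becomes the cardinality condition $|B_{r}||E_{r}| \geq p^{\delta/2}|A_{r}|$ demanded by the corollary.

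With this dictionary in place, the corollary produces, for each such scale $r = 1/p$, a parameter $t_{r} \in E_{r}$ and an exponent $\epsilon_{0} = \epsilon_{0}(\alpha,\beta,\delta) > 0$ with
\begin{displaymath} N(A + t_{r}B, r) \gtrsim |A_{r} + t_{r}B_{r}| \geq c\,p^{\epsilon_{0}}|A_{r}| \gtrsim r^{-\dim A - \epsilon_{0}}. \end{displaymath}
This is a genuine single-scale expansion of $A + t_{r}B$ beyond the trivial bound $r^{-\dim A}$, and it is available at every prime-reciprocal scale.

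The remaining and by far the hardest task is to convert these single-scale gains into a lower bound for $\dim(A + tB)$ for one fixed $t \in E$. Two obstacles must be overcome together: the exponent $\epsilon_{0}$ governs only one scale, whereas $\dim(A + tB) = \liminf_{r \to 0} \log N(A+tB,r)/\log(1/r)$ is an asymptotic quantity; and the good parameter $t_{r}$ delivered by the corollary may drift with $r$, so a priori no single $t$ witnesses expansion at infinitely many scales. I would attack both points through an induction-on-scales scheme: rather than invoke the corollary once, run it across a long geometric sequence $r_{1} > \cdots > r_{m} \to 0$, and at each stage record, via a pigeonhole and Chebyshev argument against $\mu_{E}$, that a definite proportion of $t \in E$ are good at that scale. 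One then seeks a single $t$ in the positive-measure intersection of the good sets over a positive-density subsequence of scales, and tries to accumulate the per-scale increments $\epsilon_{0}$ into a net dimension increase.

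The crux of the difficulty is precisely this accumulation. A single-scale multiplicative gain $p^{\epsilon_{0}}$ is entirely consistent with $\dim(A + tB) = \dim A$ unless one controls the branching of $A + tB$ uniformly across scales, which in turn requires an intermediate-scale, submultiplicative version of the expansion estimate that the stated corollary does not by itself supply. Bridging this gap---effectively upgrading a single-scale prime-field inequality to a multiscale statement with scale-uniform constants, while keeping the exceptional set of bad $t$ of $\mu_{E}$-measure strictly below $\mu_{E}(E)$---is where I expect the essential new input to be needed, and is the reason the statement is posed here as a conjecture rather than proved as a theorem.
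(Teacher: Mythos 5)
The statement you are attempting is posed in the paper as Conjecture \ref{mainConjecture}, and the paper does \emph{not} prove it: the authors prove only the finite-field statements (Theorem \ref{main} and Proposition \ref{SdZProp}), and they say explicitly that, even with those in hand, ``we are not much closer to proving Conjecture \ref{mainConjecture}.'' So there is no paper proof to compare against, and your attempt --- which, to its credit, candidly stops short of claiming a proof --- must be judged as an approach sketch. Your own diagnosis (the difficulty of accumulating single-scale gains into a dimension increment for one fixed $t$) is a genuine obstruction, but it is not the first place the argument breaks down.

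The earlier, fatal step is the ``dictionary'' itself. Identifying the $r$-discretization of $B \subset [0,1]$, $r = 1/p$, with a subset $B_{r} \subset \Z_{p}$ roughly respects addition (up to carries), but it does not respect multiplication: the product $t \cdot b \bmod p$ of residue classes bears no relation to the real product $tb$, since reduction mod $p$ destroys the metric structure of $[0,1]$. Consequently, the good parameter $t_{r} \in E_{r}$ produced by the corollary is a residue acting by mod-$p$ multiplication; it corresponds to no real dilation $t \in E$, and your displayed inequality $N(A + t_{r}B, r) \gtrsim |A_{r} + t_{r}B_{r}|$ is unjustified --- the left-hand side concerns real sums $a + t_{r}b$, the right-hand side sums and products taken mod $p$. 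This mismatch is precisely why finite-field results are regarded as \emph{analogues} of, rather than routes to, their continuous counterparts: $\Z_{p}$ has no nontrivial scales or approximate subrings, while $[0,1]$ at scale $r$ does, and the correct discrete model of the conjecture is Bourgain's $\delta$-discretized sum-product framework (which is exactly how the paper says the special case $0 < \dim A = \dim B < 1$ is known, via \cite{Bo}). A secondary error: the bound $N(A,r) \lesssim r^{-\dim A}$ does not follow from the existence of a Frostman measure; Hausdorff dimension yields \emph{lower} bounds on covering numbers, and a compact set of Hausdorff dimension $\dim A$ can have upper box dimension $1$, so the cardinality hypothesis $|B_{r}||E_{r}| \geq p^{\delta/2}|A_{r}|$ you feed into the corollary need not hold at any scale. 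Any serious attack on the conjecture must work intrinsically at $\delta$-discretized scales in $\R$ (in the spirit of \cite{Bo}), not by transference to $\Z_{p}$.
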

The conjecture follows from Bourgain's work \cite{Bo}, if $0 < \dim A = \dim B < 1$. Problems such as Conjecture \ref{mainConjecture} often have natural, and easier, analogues in the setting of finite fields. The same is true here, and one encounters the following question:
\begin{question}\label{Q1} Let $p \in \N$ be prime. Let $A,B,E \subset \Z_{p}$ be sets such that $|A| \leq p^{\alpha}$, $|B| \geq p^{\beta}$ and $|B||E| \geq p^{\delta}|A|$ for some $\alpha < 1$ and $\beta,\delta > 0$. Does there exist $t \in E$ such that $|A + tB| \geq p^{\epsilon}|A|$ for some $\epsilon = \epsilon(\alpha,\beta,\delta) > 0$? \end{question}
The following simple example motivates the requirements $\dim E + \dim B \geq \dim A + \delta$ and $|B||E| \geq p^{\delta}|A|$. 

\begin{ex} Consider the sets
\begin{displaymath} A = \left\{\tfrac{1}{n^{1/2}},\tfrac{2}{n^{1/2}},\ldots,\tfrac{n^{1/2}}{n^{1/2}}\right\} \end{displaymath}
and 
\begin{displaymath} B = \left\{\tfrac{1}{n^{1/4}},\tfrac{2}{n^{1/4}},\ldots,\tfrac{n^{1/4}}{n^{1/4}}\right\} = E. \end{displaymath}
for any integer $n = m^{4} \in \N$. Then $|B||E| = |A|$ and $BE \subset A$, so 
\begin{displaymath} |A + tB| \leq |A + EB| \leq |A + A| = 2|A| - 1, \qquad t \in E. \end{displaymath}
Iterating the construction above, it is not difficult to produce compact sets $A,B,E \subset [0,1]$ with $\Hd A = \tfrac{1}{2}$ and $\Hd B = \tfrac{1}{4} = \Hd E$ such that $\Hd (A + tB) = \Hd A$ for all $t \in E$. In $\Z_{p}$, an even easier example is given by $A = \{1,\ldots,\floor{p^{1/2}}\}$ and $B = \{1,\ldots,\floor{p^{1/4}}\} = E$.
\end{ex}

It turns out that the answer to Question \ref{Q1} is positive, and a good estimate follows easily from the recent incidence bound of Stevens and de Zeeuw, \cite[Theorem 4]{SdZ}:
\begin{proposition}\label{SdZProp} Assume that $A,B,E \subset \Z_{p}$ are sets with $|B| \leq |A|$ and $|B||E| \leq p$. Then, there exists $t \in E$ such that
\begin{displaymath} |A + tB| \gtrsim \min\{|A|^{2/3}(|B||E|)^{1/3},|A||B|,p\}. \end{displaymath}
\end{proposition}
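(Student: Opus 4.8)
The plan is to realise the sumsets $A + tB$, $t \in E$, as the line set of a point--line incidence problem whose \emph{point} set is a Cartesian product, and then feed the configuration into the Stevens--de Zeeuw bound \cite[Theorem 4]{SdZ}. Concretely, I would take as points $\calP := B \times A \subset \Z_{p}^{2}$, so that $|\calP| = |A||B|$ with the smaller factor being $|B|$; here the hypothesis $|B| \leq |A|$ is exactly what is needed to match the orientation in their theorem, in which the smaller coordinate set carries the exponent $\tfrac{3}{4}$. For the lines, to each $t \in E$ and each $x \in A + tB$ I would associate $\ell_{t,x} := \{(b,a) \in \Z_{p}^{2} : a + tb = x\}$, and let $\calL$ be the collection of all these lines, so that $|\calL| = \sum_{t \in E} |A + tB|$.

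The virtue of this choice is that the incidences can be counted exactly. For each fixed $t \in E$, a point $(b,a) \in \calP$ lies on exactly one line $\ell_{t,x} \in \calL$, namely the one with $x = a + tb \in A + tB$; hence every point of $\calP$ lies on precisely $|E|$ lines and $I(\calP,\calL) = |E||\calP| = |A||B||E|$. On the other hand the Stevens--de Zeeuw estimate reads $I(\calP,\calL) \lesssim |B|^{3/4}|A|^{1/2}|\calL|^{3/4} + |\calL|$, the $|\calP|$ term being harmless since $|\calP| \leq I(\calP,\calL)$. Writing $K := \max_{t \in E} |A + tB|$ and using $|\calL| \leq |E|K$, I would compare the two estimates. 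If the first term on the right dominates, then $|A||B||E| \lesssim |B|^{3/4}|A|^{1/2}(|E|K)^{3/4}$, which rearranges to $K \gtrsim |A|^{2/3}(|B||E|)^{1/3}$; if instead the second term dominates, then $|A||B||E| \lesssim |E|K$ gives $K \gtrsim |A||B|$. In either case the maximising $t$ supplies the $t \in E$ required by the statement.

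What remains is to install the factor $\min\{\,\cdot\,,p\}$ and to verify that the configuration meets the size hypotheses of \cite[Theorem 4]{SdZ}, and this is where I expect the only genuine bookkeeping to lie. The cap at $p$ should emerge from the admissibility range of the incidence bound together with the standing assumption $|B||E| \leq p$, which keeps $|\calP|$ and $|\calL|$ below the thresholds at which the $p$-dependent error term of the theorem would intervene; the trivial bounds $|A + tB| \leq |A||B|$ and $|A + tB| \leq p$ then render the three-way minimum consistent. In the degenerate regime where $K$ is so small that $|\calL|$ falls below the lower size requirements of the theorem, one can estimate $|A+tB|$ by hand, since there the claimed minimum is already of order $|A||B|$ or smaller. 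Thus the main obstacle is not the geometry, which is essentially forced, but pinning down the precise hypotheses of the Stevens--de Zeeuw bound and tracking the thresholds that generate the $p$-term.
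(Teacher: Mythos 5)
Your construction is, up to swapping the two coordinates, exactly the paper's proof: the paper takes points $A \times B$ and lines $\{(x,y) : x = r - ty\}$ for $r \in A + tB$, $t \in E$, counts the same $|A||B||E|$ incidences, and runs the same two-case comparison against the Stevens--de Zeeuw bound. The bookkeeping you defer is indeed routine and your ``degenerate regime'' is in fact empty: with $K := \max_{t \in E}|A + tB| \leq p$ one has $|\calL| \geq |A||E| \geq |A|$, so $|A|^{2}|B| \leq |A|^{3} \leq |\calL|^{3}$ holds automatically (using $|B| \leq |A|$), and $|B||\calL| \leq |B||E|K \leq p^{2}$ follows from $|B||E| \leq p$.
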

For $A,B,E \subset \R$, by comparison, the Szemer\'edi-Trotter theorem gives $|A + tB| \gtrsim \min\{(|A||B||E|)^{1/2},|A||B|\}$ for some $t \in E$. Proposition \ref{SdZProp} is also closely related to \cite[Theorem 3]{RNRS}. Proposition \ref{SdZProp} certainly answers Question \ref{Q1}. However, the proofs in \cite{RNRS,SdZ} are based on the polynomial method, more precisely on a point-plane incidence bound in $\Z_{p}^{3}$ by Rudnev \cite{Ru}. It is not clear how to apply similar ideas to the continuous problem, Conjecture \ref{mainConjecture}, so a more elementary approach to Question \ref{Q1} seemed desirable. Here is the main result of this note:

\begin{thm}\label{main} Let $A,B \subset \Z_{p}$ and $\beta,\epsilon \in (0,1]$. Assume that $|B| > p^{\beta} > 4$. Then, there exists an integer
\begin{displaymath} k \leq \exp(C\max\{1/\beta,1/\epsilon\}), \end{displaymath}
and elements $a_{1},\ldots,a_{k} \in A$ such that
\begin{equation}\label{mainIneq} |a_{1}B \pm \ldots \pm a_{k}B| \geq cp^{-\epsilon}\min\{|A||B|,p\} \end{equation}
for certain choices of signs $\pm \in \{-,+\}$. Here $c \geq 2^{-12}$ and $C \leq 122$ are absolute constants.
\end{thm}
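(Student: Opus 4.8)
The engine of my approach is a Cauchy--Schwarz/additive-energy estimate asserting that a single well-chosen dilate already spreads a given set. Concretely, for any nonempty $S \subseteq \Z_p$ I claim that
\[ \max_{a \in A}\, |S - aB| \;\geq\; \tfrac{1}{2}\min\{|A|,\,|S||B|\}. \]
To see this, write $E(S,aB)=\#\{(s,s',b,b')\in S^2\times B^2 : s-ab=s'-ab'\}$ and note the standard bound $|S-aB|\geq |S|^2|B|^2/E(S,aB)$. Summing over $a\in A$, the diagonal contribution $b=b'$ equals $|A||S||B|$, while every off-diagonal quadruple $(s,s',b,b')$ with $b\neq b'$ fixes the value of $a=(s-s')/(b-b')$, so the off-diagonal total is at most $|S|^2|B|^2$; hence $\sum_a E(S,aB)\leq |A||S||B|+|S|^2|B|^2$. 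Applying Cauchy--Schwarz to $\sum_a |S-aB|$ and dividing by $|A|$ yields the claim, and the same bound holds with $+$ in place of $-$. Starting from $S_0=a_0B$ for an arbitrary $a_0\in A$ and appending one dilate at a time, this inequality multiplies $|S_j|$ by a factor $\gtrsim |B|$ as long as $|S_j||B|\leq |A|$, so it reaches size $\asymp \min\{|A|,p\}$ after only $O(1/\beta)$ dilates. In particular it already proves the theorem whenever $|A|\lesssim |B|$, since then $\min\{|A||B|,p\}\lesssim\min\{|B|^2,p\}$.

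The substance of the theorem therefore lies in amplifying a set of size $\asymp\min\{|A|,p\}$ up to $\asymp\min\{|A||B|,p\}$ in the regime $|A|>|B|$, and I expect this to be the main obstacle. The single-dilate inequality above is powerless here: its off-diagonal bound $|S|^2|B|^2$ is saturated exactly when the slopes $(s-s')/(b-b')$ cluster inside $A$, and in that extremal configuration no individual dilate grows $S$ past $\asymp|A|$. Breaking this barrier with coefficients confined to $A$ is precisely a sum--product phenomenon. The plan is to run a structure-versus-expansion dichotomy on the current sum of dilates $S$: either a sharper energy count produces a dilate with $|S\pm aB|\geq p^{c}|S|$ (the case where the slope set is spread), whereupon I record the extra dilate and iterate; or the slopes concentrate in $A$, a rigid multiplicative condition which, through the Plünnecke--Ruzsa inequality together with the legitimate brick-enlargement $a(B-B)=aB-aB$, is leveraged to exhibit a \emph{different} combination of dilates that is large. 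Each round enlarges the running sum by a fixed power of $p$ while multiplying the number of dilates by a bounded factor, so $O(\max\{1/\beta,1/\epsilon\})$ rounds suffice, giving $k\leq\exp(C\max\{1/\beta,1/\epsilon\})$; the cumulative multiplicative losses from the dichotomy are absorbed into the factor $p^{-\epsilon}$, and the constant $2^{-12}$ bookkeeps the bounded per-round overhead.

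Finally, when $|A||B|\geq p$ the target degenerates to $\asymp p$, i.e.\ a positive proportion of $\Z_p$, and I would close with a Glibichuk--Konyagin type covering estimate. Once the running sum has reached size $p^{1-O(\epsilon)}$, a bounded number of further dilates $\pm aB$ fills out a fixed proportion of $\Z_p$, exploiting that $\Z_p$ has no proper subfields and hence no nontrivial approximate multiplicative stabilizers to obstruct the covering. This stage contributes only $O(1)$ to $k$ and is thus harmless for the stated bound, so the whole construction stays within $k\leq\exp(C\max\{1/\beta,1/\epsilon\})$ with $c\geq 2^{-12}$ and $C\leq 122$.
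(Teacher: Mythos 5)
Your opening energy estimate is correct: summing $E(S,aB)$ over $a\in A$, splitting into diagonal and off-diagonal quadruples, and applying Cauchy--Schwarz does give $\max_{a\in A}|S\pm aB|\geq\tfrac{1}{2}\min\{|A|,|S||B|\}$, and iterating it produces a sum of $O(1/\beta)$ dilates of size $\asymp|A|$. But the deduction you draw from it is false: it does \emph{not} prove the theorem when $|A|\lesssim|B|$. The iteration saturates at $\asymp|A|$ --- once $|S||B|\geq|A|$ the off-diagonal term dominates and the inequality returns nothing new --- while the target is $p^{-\epsilon}\min\{|A||B|,p\}$. Take $|A|=|B|=p^{1/2}$: your construction reaches a set of size $\asymp p^{1/2}$, but the theorem demands size $\gtrsim p^{1-\epsilon}$. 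The gap between $|A|$ and $\min\{|A||B|,p\}$ is just as wide when $|A|\leq|B|$ as when $|A|>|B|$, so no case of the theorem is disposed of by your first paragraph beyond the trivial fact that sums of dilates reach size $\asymp\min\{|A|,p\}$.

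The decisive gap is in your second paragraph: the amplification past $\min\{|A|,p\}$, which is the entire sum--product content of the theorem, is never actually argued. Your dichotomy names two cases, but in the structured case (slopes $(s-s')/(b-b')$ concentrating in $A$) you only assert that Pl\"unnecke--Ruzsa together with $a(B-B)=aB-aB$ ``is leveraged to exhibit a different combination of dilates that is large''. That assertion \emph{is} the statement being proved, and concentration of slopes in $A$ is not by itself a usable multiplicative rigidity --- converting it into expansion is exactly what sum--product estimates are for, and no mechanism is supplied. Compare with the paper: its construction of $H$ (adding dilates while the set keeps growing by $p^{\delta}$, stopping when it does not) matches your ``record and iterate'' skeleton, but in the structured case the paper does real work: a Cauchy--Schwarz/incidence argument (Lemma \ref{mainLemma}) produces $b_{0}$ and a large $B'\subset B$ with $|H+(b_{0}-b)\bar{A}|\leq 2p^{4\delta}|H|$ for all $b\in B'$, and then Bourgain's sum--product lemma \cite[Lemma 2]{Bo2} (cf. \eqref{BourgainIneq}), applied to $\bar{A}$ and a dilate of $\bar{B}=b_{0}-B'$, yields a specific combination $\Sigma$ of dilates of $\bar{A}$ by $\bar{B}$ and of $\bar{B}$ by $\bar{A}$ of size $\geq\tfrac{1}{2}\min\{|A||B|,p-1\}$; Pl\"unnecke--Ruzsa then traps $|\Sigma|\leq 2^{10}p^{58\delta}|H|$, forcing $|H|$ to be large. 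Without importing such a lemma (or reproving it), your outline cannot close. The final Glibichuk--Konyagin step is likewise only invoked, is conditional on having already reached size $p^{1-O(\epsilon)}$ (the unproven part), and in any case covering results for $N$-fold product sumsets $NAB$ do not directly produce a single tuple $a_{1},\ldots,a_{N}$ with $a_{1}B\pm\cdots\pm a_{N}B$ large.
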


A positive answer to Question \ref{Q1} follows easily from Theorem \ref{main}, applied to $B$ and $E$, and combined with the Pl\"unnecke-Ruzsa inequalities. The proof of Theorem \ref{main} is elementary and does not use polynomials; instead, it consists of a reduction to a sum-product estimate of Bourgain, \cite[Lemma 2]{Bo2}, stating briefly that 
\begin{equation}\label{BourgainIneq} |8AB - 8AB| \gtrsim \min\{|A||B|,p\}. \end{equation}
Unfortunately, while the proof of \eqref{BourgainIneq} is elementary as well, it does not easily generalise to a "continuous" setting. So, at the end of the day, we are not much closer to proving Conjecture \ref{mainConjecture}.

The paper is organised so that Theorem \ref{main} is proven in Section \ref{mainProof}. The application to Question \ref{Q1}, as well as the proof or Proposition \ref{SdZProp}, is discussed in Section \ref{Q1Section}. 

\section{Acknowledgements} We are grateful to Changhao Chen for discussions during the early stages of the project, and for useful comments on the manuscript. We also thank Frank de Zeeuw and Oliver Roche-Newton for useful comments, and for pointing out references.

\section{Proof of the main theorem}\label{mainProof}

Before starting the proof of Theorem \ref{main}, we record the following lemma. It is quite likely well-known, and at least we extracted the argument from a paper of Bourgain, see \cite[(7.20)]{Bo}. 
\begin{lemma}\label{mainLemma} Let $(\mathbb{G},+)$ be an Abelian group, and assume that $A,B \subset \mathbb{G}$ are sets with $|A + A| \leq C_{1}|A|$ and $|B + B| \leq C_{2}|B|$. Assume moreover that there exists $G \subset A \times B$ with $|G| \geq |A||B|/C_{3}$ such that 
\begin{displaymath} |\pi_{1}(G)| \leq C_{4}|A|, \end{displaymath} 
where $\pi_{1}(x,y) = x + y$. Then $|A + B| \leq C|A|$ with $C = C_{1}C_{2}C_{3}C_{4}$.
\end{lemma}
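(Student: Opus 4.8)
The plan is to extract a single, highly structured object from the graph $G$ and then propagate its structure to all of $A+B$ using the two doubling hypotheses. The first step is an averaging argument: since $|G| \geq |A||B|/C_{3}$ and $|\pi_{1}(G)| \leq C_{4}|A|$, some value $s_{0} \in \pi_{1}(G)$ is attained as a sum $a+b$ with $(a,b) \in G$ at least $|G|/|\pi_{1}(G)| \geq |B|/(C_{3}C_{4})$ times. Writing $D := \{a \in A : (a, s_{0}-a) \in G\}$, this says that $|D| \geq |B|/(C_{3}C_{4})$, that $D \subseteq A$, and --- crucially --- that $s_{0}-D \subseteq B$. Thus a single reflected copy of $D$ is a dense subset of $A$ and, after reflection through $s_{0}$, also of $B$.

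The reason I would insist on a popular \emph{sum} rather than a popular fibre is that this is precisely what tames $\pi_{1}(G)$, which carries no additive structure of its own (we have no control on $|\pi_{1}(G)+\pi_{1}(G)|$). Because $s_{0}-D \subseteq B$, any difference set of the $B$-side copy collapses: $(s_{0}-D)-(s_{0}-D) = D-D$, and since $D \subseteq A$ this lands inside $A-A$, which \emph{is} controlled by $|A+A| \leq C_{1}|A|$ via the Pl\"unnecke-Ruzsa inequality. Differences of ordinary fibres, by contrast, only sit inside $\pi_{1}(G)-\pi_{1}(G)$ and cannot be controlled; the reflection is exactly what routes everything back to $A-A$.

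With $D$ in hand I would globalise by a single application of the Ruzsa covering lemma. Since $s_{0}-D \subseteq B$ and $|s_{0}-D| = |D| \geq |B|/(C_{3}C_{4})$, we have $|B+(s_{0}-D)| \leq |B+B| \leq C_{2}|B| \leq C_{2}C_{3}C_{4}\,|s_{0}-D|$, so the covering lemma produces $T \subseteq B$ with $|T| \leq C_{2}C_{3}C_{4}$ and $B \subseteq T + (s_{0}-D) - (s_{0}-D) = T + (D-D)$. Adding $A$ and using $D \subseteq A$ gives $A + B \subseteq T + (A + D - D) \subseteq T + (A + A - A)$, whence $|A+B| \leq |T|\cdot|A+A-A|$. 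It remains to bound $|A+A-A|$, and here the doubling of $A$ enters for the only time.

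The main obstacle is quantitative. The crude bound $|A+A-A| \leq C_{1}^{3}|A|$ from Pl\"unnecke-Ruzsa yields only $|A+B| \leq C_{1}^{3}C_{2}C_{3}C_{4}|A|$, losing two powers of $C_{1}$ against the asserted constant. Reaching the clean product $C_{1}C_{2}C_{3}C_{4}$ is the delicate endpoint: I would replace the wasteful passage through $A+A-A$ by Petridis's sharp form of the Pl\"unnecke-Ruzsa inequality, choosing an optimal subset of $A$ on which the relevant ratio is minimised, so that each of $C_{1}$ and $C_{2}$ is spent exactly once, alongside the $C_{3}$ and $C_{4}$ already paid in the averaging step. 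Verifying that this bookkeeping closes at precisely $C_{1}C_{2}C_{3}C_{4}$, rather than at a larger product of powers, is the part I expect to require the most care.
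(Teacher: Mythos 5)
Your first three steps are sound and genuinely different from the paper's argument: the popular-sum fibre $D \subseteq A$ with $|D| \geq |B|/(C_{3}C_{4})$ and $s_{0} - D \subseteq B$, the Ruzsa covering step producing $T \subseteq B$ with $|T| \leq C_{2}C_{3}C_{4}$ and $B \subseteq T + D - D$, and the containment $A + B \subseteq T + (A + D - D)$ are all correct, and with Pl\"unnecke--Ruzsa they prove $|A + B| \leq C_{1}^{3}C_{2}C_{3}C_{4}|A|$. But the lemma asserts the constant $C_{1}C_{2}C_{3}C_{4}$, and your final paragraph --- the claim that Petridis's inequality will recover the two lost powers of $C_{1}$ --- is a genuine gap, not bookkeeping. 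Since the covering lemma gives you nothing better than $|T| \leq C_{2}C_{3}C_{4}$, your route needs, in effect, $|A + D - D| \leq C_{1}|A|$ for a subset $D \subseteq A$ over which you have no control beyond $D \subseteq A$; and already the model case $D = A$ of this inequality is false in general. Take $A = \{1,\ldots,N\} \cup S$ with $S$ a rapidly growing (hence Sidon, widely spaced) set of size $\sqrt{N}$: then $|A + A| = O(N^{3/2})$, so $C_{1} = O(\sqrt{N})$, while $A + A - A \supseteq \{1,\ldots,N\} + S - S$, which has cardinality $\gtrsim N \cdot |S|^{2} \approx N^{2} \approx C_{1}^{2}|A|$. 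Petridis's lemma sharpens the constants in Pl\"unnecke-type bounds but still yields exponents that grow with the number of summands; the loss in your argument is structural --- it comes from routing $A + B$ through the triple sumset $A + D - D$ --- and no refinement of sumset inequalities can remove it.

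The paper's proof avoids iterated sumsets altogether, which is exactly how it attains the clean constant. If $t = a + b \in A + B$ with $(a,b) \in A \times B$, then every $(x,y) \in G + (a,b) \subseteq (A + A) \times (B + B)$ satisfies $(a,b) \in -G + (x,y)$, hence $t \in \pi_{1}(-G + (x,y)) = \pi_{1}(x,y) - \pi_{1}(G)$. So each $t \in A + B$ is covered at least $|G|$ times by the $|A + A|\,|B + B|$ translates $\pi_{1}(x,y) - \pi_{1}(G)$, each of cardinality $|\pi_{1}(G)| \leq C_{4}|A|$, and double counting gives
\begin{displaymath} |A + B| \leq \frac{|A + A|\,|B + B|\,|\pi_{1}(G)|}{|G|} \leq C_{1}C_{2}C_{3}C_{4}|A|, \end{displaymath}
with each hypothesis spent exactly once. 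Incidentally, your weaker constant $C_{1}^{3}C_{2}C_{3}C_{4}$ would still suffice for the application in the paper --- it only degrades the absolute constants in Theorem \ref{main} --- but it does not prove Lemma \ref{mainLemma} as stated, and the proposed endgame cannot be completed as described.
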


\begin{proof} We start by observing that
\begin{equation}\label{form14} \chi_{A + B}(t) \leq \frac{1}{|G|} \sum_{(x,y) \in (A + A) \times (B + B)} \chi_{\pi_{1}(-G + (x,y))}(t), \qquad t \in \mathbb{G}. \end{equation}
Indeed, if $t \in A + B = \pi_{1}(A \times B)$, then $t = \pi_{1}(a,b)$ for some $(a,b) \in A \times B$. We then note that $(a,b) \in -G + (x,y)$ -- and hence $t = \pi_{1}(a,b) \in \pi_{1}(-G + (x,y))$ -- for all 
\begin{displaymath} (x,y) \in G + (a,b) \subset (A \times B) + (A \times B) = (A + A) \times (B + B). \end{displaymath}
So, \eqref{form14} follows from $|G + (a,b)| = |G|$. Finally,
\begin{align*} |A + B| & = \sum_{t \in \mathbb{G}} \chi_{A + B}(t)\\
& \leq \frac{1}{|G|} \sum_{(x,y) \in (A + A) \times (B + B)} \sum_{t \in \mathbb{G}} \chi_{\pi_{1}(-G + (x,y))}(t)\\
& \leq \frac{|\pi_{1}(G)||A + A||B + B|}{|G|} \leq \frac{C_{1}C_{2}C_{3}C_{4}|A|^{2}|B|}{|A||B|} = C|A|, \end{align*}
as claimed. \end{proof}

Now, we are ready to prove Theorem \ref{main}. 

\begin{proof}[Proof of Theorem \ref{main}] We may assume that $A,B \neq \emptyset$. Fix $n \in \N$ such that
\begin{equation}\label{form20} \max\left\{\tfrac{4}{\beta},\tfrac{58}{\epsilon}\right\} \leq n \leq 60\max\left\{\tfrac{1}{\beta},\tfrac{1}{\epsilon}\right\}, \end{equation}
and write $\delta := 1/n$. In particular
\begin{equation}\label{form17} \frac{p^{-2\delta}}{2}|B| \geq \frac{p^{-2/n + \beta}}{2} > \frac{p^{\beta/2}}{2} > 1. \end{equation}
Write $A_{1} := A$, and inductively
\begin{displaymath} A_{j + 1} := A_{j} + A_{j}, \qquad j \geq 1. \end{displaymath}
We note that there exists $1 \leq j \leq n + 1$ such that
\begin{equation}\label{form3} |A_{j} + A_{j}| = |A_{j + 1}| \leq p^{\delta}|A_{j}|, \end{equation}
since otherwise
\begin{displaymath} p \geq |A_{n + 2}| \geq p^{\delta}|A_{n + 1}| \geq \ldots \geq p^{\delta \cdot (n + 1)}|A_{1}| > p, \end{displaymath}
a contradiction. We define $\bar{A} := A_{j}$ for some index $j \leq n + 2$ satisfying \eqref{form3}. 

Next, in a similar spirit, we define a sequence of sets $H_{k}$, as follows. Start by setting $H_{1} := a_{1}B$ for any $a_{1} \in \bar{A}$. Next, assume that $H_{l}$ has already been defined for some $l \geq 1$. Choose $2^{l - 1}$ elements $a^{l}_{1},\ldots,a^{l}_{2^{l - 1}} \in \bar{A}$, and $(2^{l - 1} - 1)$ signs $\pm \in \{+,-\}$ such that
\begin{displaymath} H_{l + 1} := H_{l} \pm a^{l}_{1}B \pm \ldots \pm a^{l}_{2^{l - 1}}B \end{displaymath}
has maximal cardinality (among all such choices of $a_{1}^{l},\ldots,a_{2^{l - 1}}^{l}$, and choices of signs). As before, there exists $1 \leq l \leq n + 1$ such that
\begin{displaymath} |H_{l + 1}| \leq p^{\delta}|H_{l}|. \end{displaymath}
Now, we set $H := H_{l}$ for such an index $1 \leq l \leq n + 1$. We note that
\begin{equation}\label{HProp1} |H + H| \leq |H_{l + 1}| \leq p^{\delta}|H_{l}| = p^{\delta}|H|, \end{equation}
by the maximality of $|H_{l + 1}|$, since $H = H_{l}$ can be written as a sum of $2^{l - 1}$ terms of the form $a_{j}B$, $a_{j} \in \bar{A}$. It is even clearer that
\begin{equation}\label{HProp2} |H \pm aB| \leq |H_{l + 1}| \leq p^{\delta}|H_{l}| = p^{\delta}|H|, \qquad a \in \bar{A}. \end{equation}

Evidently $H$ is a set of the kind appearing on the left hand side of \eqref{mainIneq}; more precisely $H$ is a sum of at most 
\begin{displaymath} 2^{2(n + 1)} \leq \exp(122\max\{1/\beta,1/\epsilon\}) \end{displaymath}
terms of the form $a_{j}B$ with $a_{j} \in A$. It remains to show that $H$ satisfies \eqref{mainIneq}.

We start the proof by showing that there exists an element $b_{0} \in B$, and subset $B' \subset B$ of cardinality $|B'| \geq p^{-2\delta}|B|/2$ such that
\begin{equation}\label{form4} |H + (b_{0} - b)\bar{A}| \leq 2p^{4\delta}|H|, \qquad b \in B'. \end{equation}
To prove \eqref{form4}, we consider the following set $P \subset \Z_{p}^{2}$,
\begin{displaymath} P := \{(a,r) \in \Z_{p}^{2} : a \in \bar{A} \text{ and } r \in aB + H\}, \end{displaymath}
and we note that
\begin{equation}\label{form1} |\bar{A}||H| \leq |P| \leq p^{\delta}|\bar{A}||H| \end{equation}
by \eqref{HProp2}. Consider also the following family of lines: $\calL := \{\ell_{h,b}\}_{(h,b) \in H \times B}$, where
\begin{displaymath} \ell_{h,b} = \{(x,y) \in \Z_{p}^{2} : y = xb + h\}. \end{displaymath}
We note that every line in $\calL$ contains exactly $|\bar{A}|$ points in $P$. Indeed:
\begin{displaymath} P \cap \ell_{h,b} = \{(a,ab + h) : a \in \bar{A}\}, \qquad (h,b) \in H \times B.  \end{displaymath} 
It follows that that if $b \in B$ is fixed, the (disjoint) lines $\{\ell_{h,b}\}_{h \in H}$ cover $|\bar{A}||H|$ points of $P$ in total. In other words, the sets
\begin{displaymath} P_{b} := \bigcup_{h \in H} (P \cap \ell_{h,b}) \subset P, \qquad b \in B, \end{displaymath}
satisfy 
\begin{equation}\label{form16} |P_{b}| = |\bar{A}||H| \geq p^{-\delta}|P|, \qquad b \in B, \end{equation}
recalling \eqref{form1}. Using Cauchy-Schwarz in a standard way, see for example \cite[Lemma 4.2]{Gr}, it follows from \eqref{form16} that there exists $b_{0} \in B$, and a subset $B' \subset B$ with $|B'| \geq p^{-2\delta}|B|/2$ such that
\begin{equation}\label{form6} |P_{b} \cap P_{b_{0}}| \geq \frac{p^{-2\delta}}{2}|P|, \qquad b \in B'. \end{equation}
We record here that 
\begin{equation}\label{form15} \pi_{-b}(P_{b} \cap P_{b_{0}}) \subset \pi_{-b}(P_{b}) \subset H, \qquad b \in B, \end{equation}
where $\pi_{c}(x,y) = xc + y$. Indeed, if $p = (a,ab + h) \in P_{b}$ for some $a \in \bar{A}$ and $h \in H$, then
\begin{displaymath} \pi_{-b}(p) = -ab + ab + h = h \in H. \end{displaymath}
Now, given such a point $b_{0} \in B$, we define the following bijective linear map:
\begin{displaymath} T(x,y) := (x,y - b_{0}x). \end{displaymath}
It is immediate that
\begin{equation}\label{form5} \pi_{b_{0} - b}(T(x,y)) = \pi_{-b}(x,y), \qquad (x,y) \in \Z_{p}^{2}. \end{equation}
Moreover, $T(P_{b_{0}}) \subset \bar{A} \times H$. Indeed, if $p = (a,ab_{0} + h) \in P_{b_{0}}$ for some $a \in \bar{A}$ and $h \in H$, then
\begin{displaymath} T(p) = (a,ab_{0} + h - ab_{0}) = (a,h) \in \bar{A} \times H, \end{displaymath}
as claimed. We write
\begin{displaymath} G_{b} := T(P_{b} \cap P_{b_{0}}) \subset \bar{A} \times H, \qquad b \in B', \end{displaymath}
and conclude from \eqref{form6} that
\begin{equation}\label{form7} |G_{b}| \geq \frac{p^{-2\delta}}{2}|P| \geq \frac{p^{-2\delta}}{2}|\bar{A}||H|. \end{equation}
From \eqref{form5} and \eqref{form15}, we conclude that
\begin{displaymath} \pi_{b_{0} - b}(G_{b}) = \pi_{-b}(P_{b} \cap P_{b_{0}}) \subset H, \qquad b \in B', \end{displaymath}
so in particular $|\pi_{b_{0} - b}(G_{b})| \leq |H|$. Now, Lemma \ref{mainLemma} applied to the sets $H$, $(b_{0} - b)\bar{A}$ and $\{(y,(b_{0} - b)x) : (x,y) \in G_{b}\} \subset H \times (b_{0} - b)\bar{A}$ implies, recalling \eqref{form3}, \eqref{HProp1} and \eqref{form7}, that
\begin{displaymath} |H + (b_{0} - b)\bar{A}| \leq 2p^{4\delta}|H|, \qquad b \in B', \end{displaymath}
as claimed in \eqref{form4}.

Set $\bar{B} := b_{0} - B'$. Then $|\bar{B}| \geq p^{-2\delta}|B|/2 > 1$ by \eqref{form17}, and 
\begin{equation}\label{form8} |H \pm b\bar{A}| \leq 2p^{5\delta}|H| \quad \text{and} \quad |H \pm a\bar{B}| \leq p^{\delta}|H|, \qquad a \in \bar{A}, \: b \in \bar{B}, \end{equation}
combining \eqref{HProp2} and \eqref{form4}. The "$-$" inequality $|H - b\bar{A}| \leq 2p^{5\delta}|H|$ moreover uses \eqref{form3} and Ruzsa's triangle inequality:
\begin{displaymath} |H - b\bar{A}| \leq \frac{|H + b\bar{A}||b\bar{A} + b\bar{A}|}{|\bar{A}|} \leq 2p^{5\delta}|H|, \qquad b \in \bar{B}.  \end{displaymath}

  Now, we apply a result of Bourgain, namely \cite[Lemma 2]{Bo2}. It states that if 
\begin{equation}\label{form18} (\bar{A} - \bar{A}) \cap (\bar{B} - \bar{B}) \neq \{0\}, \end{equation}
then there exist subsets $\bar{B}_{1} \subset \bar{B}$, $Z \subset (\bar{A} - \bar{A}) \cap (\bar{B} - \bar{B})$, and elements $a_{1},\ldots,a_{6} \in \bar{A}$, $b_{1},\ldots,b_{6} \in \bar{B}$ such that  
\begin{equation}\label{form19} |(b_{1} - b_{2})A + (a_{1} - a_{2} + a_{3} - a_{4})\bar{B}_{1} + (a_{5} - a_{6} + b_{3} - b_{4} + b_{5} - b_{6})Z| \geq \frac{1}{2} \min\{|\bar{A}||\bar{B}|,p - 1\}. \end{equation}
The condition \eqref{form18} is not automatically satisfied, but in any case we can proceed as follows. Since Theorem \ref{main} is trivial for $|A| \leq 1$, we may assume that $|\bar{A}| \geq |A| \geq 2$. Since also $|\bar{B}| \geq 2$ by the choice of $\delta$ in \eqref{form17}, there exist $a,a' \in \bar{A}$ and $b,b' \in \bar{B}$ with $a \neq a'$ and $b \neq b'$. Then, writing $\xi := (a - a')/(b - b') \neq 0$, we have
\begin{displaymath} (\bar{A} - \bar{A}) \cap (\xi \bar{B} - \xi \bar{B}) \supset \{a - a'\}, \end{displaymath}
and so Bourgain's result is applicable to $\bar{A}$ and $\xi\bar{B}$. Then, \eqref{form19} implies the existence of $a_{1},\ldots,a_{6} \in \bar{A}$ and $b_{1},\ldots,b_{6} \in \bar{B}$ such that the sum
\begin{displaymath} \xi(b_{1} - b_{2})\bar{A} + (a_{1} - a_{2} + a_{3} - a_{4})\xi \bar{B} + (a_{5} - a_{6})(\xi \bar{B} - \xi \bar{B}) + \xi(b_{3} - b_{4} + b_{5} - b_{6})(\bar{A} - \bar{A}) \end{displaymath}
has cardinality at least $\min\{|A||B|,p - 1\}/2$. Then, the same conclusion follows automatically for the sum 
\begin{align*} (b_{1} - b_{2})\bar{A} & + (a_{1} - a_{2} + a_{3} - a_{4}) \bar{B} + (a_{5} - a_{6})(\bar{B} - \bar{B}) + (b_{3} - b_{4} + b_{5} - b_{6})(\bar{A} - \bar{A})\\
& \subset b_{1}\bar{A} - b_{2}\bar{A} + b_{3}\bar{A} - b_{4}\bar{A} + b_{5}\bar{A} - b_{6}\bar{A} - b_{3}\bar{A} + b_{4}\bar{A} - b_{5}\bar{A} + b_{6}\bar{A}\\
&\quad + a_{1}\bar{B} - a_{2}\bar{B} + a_{3}\bar{B} - a_{4}\bar{B} + a_{5}\bar{B} - a_{6}\bar{B} - a_{5}\bar{B} + a_{6}\bar{B} =: \Sigma_{a_{1}\cdots a_{6}}^{b_{1}\cdots b_{6}}(\bar{A},\bar{B}).  \end{align*}
(In fact, Bourgain uses the same argument to prove the second part of \cite[Lemma 2]{Bo2}.) Finally, using \eqref{form8} and the Pl\"unnecke-Ruzsa inequalities for different summands (see \cite{Ruz} or \cite[Theorem 6.1]{Ruz2}), we conclude that 
\begin{displaymath} \frac{1}{2}\min\{|A||B|,p - 1\} \leq |\Sigma_{a_{1}\cdots a_{6}}^{b_{1}\cdots b_{6}}(\bar{A},\bar{B})| \leq 2^{10}p^{58\delta}|H| \leq 2^{10}p^{\epsilon}|H|. \end{displaymath}
The final inequality follows from \eqref{form20}. Hence,
\begin{displaymath} |H| \geq 2^{-12} p^{-58\delta} \min\{|A||B|,p\} \geq 2^{-12}p^{-\epsilon}\min\{|A||B|,p\} \end{displaymath}
as required. The proof of Theorem \ref{main} is complete.
\end{proof} 

\section{Lower bounds for $|A + tB|$}\label{Q1Section}

We start by applying Theorem \ref{main} to Question \ref{Q1}. Then, we recall the result of Stevens and de Zeeuw from \cite{SdZ} and apply it to prove Proposition \ref{SdZProp}.

Let $A,B,E \subset \Z_{p}$ be as in Question \ref{Q1}, with $|A| \leq p^{\alpha}$, $|B| \geq p^{\beta}$ and $|B||E| \geq p^{\delta}|A|$, and assume that $|A + tB| \leq p^{\epsilon}|A|$ for all $t \in E$, and for some $\epsilon > 0$. Then, for any $t_{1},\ldots,t_{k} \in E$, it follows from the Pl\"unnecke-Ruzsa inequalities for different summands (see \cite{Ruz} or \cite[Theorem 6.1]{Ruz2})  that there exists a non-empty subset $X \subset A$ with the property that
\begin{displaymath} |X + t_{1}B + \ldots + t_{k}B| \lesssim_{k} p^{k\epsilon}|X|. \end{displaymath}
By another application of the Pl\"unnecke-Ruzsa inequalities,
\begin{equation}\label{form21} |(t_{1}B + \ldots + t_{k}B) - (t_{1}B + \ldots + t_{k}B)| \lesssim_{k} p^{2k\epsilon}|A|. \end{equation}
Now, write $\epsilon' := \min\{\delta/2, (1 - \alpha)/2\}$, and use Theorem \ref{main} to choose $t_{1},\ldots,t_{k} \in E$, with $k \leq 2^{C\max\{1/\beta,1/\epsilon'\}}$, so that
\begin{displaymath} |t_{1}B \pm \ldots \pm t_{k}B| \gtrsim p^{-\epsilon'}\min\{|B||E|,p\} \geq \min\{p^{\delta/2},p^{(1 - \alpha)/2}\}|A| \end{displaymath}
for certain signs $\pm \in \{+,-\}$. It follows from \eqref{form21} that $p^{2k\epsilon}|A| \gtrsim \min\{p^{\delta/2},p^{(1 - \alpha)/2}\}|A|$, which gives a lower bound for $\epsilon$, depending only on $\alpha,\beta,\delta$.

To prove Proposition \ref{SdZProp}, we recall the statement of \cite[Theorem 4]{SdZ}:
\begin{thm}[Stevens-de Zeeuw]\label{SdZThm} Let $A,B \subset \Z_{p}$ be sets, and let $\calL$ be a collection of lines in $\Z_{p}^{2}$, with 
\begin{displaymath} |B| \leq |A|, \quad |A|^{2}|B| \leq |\calL|^{3}, \quad \text{and} \quad |B||\calL| \leq p^{2}. \end{displaymath} 
Then, the the set of incidences $\calI(A \times B,\calL) := \{(a,b,\ell) : (a,b) \in (A \times B) \cap \ell \text{ and } \ell \in \calL\}$ has cardinality at most
\begin{displaymath} |\calI(A \times B,\calL)| \lesssim |A|^{1/2}|B|^{3/4}|\calL|^{3/4} + |\calL|. \end{displaymath}
\end{thm}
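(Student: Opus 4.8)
The plan is to derive Theorem~\ref{SdZThm} from Rudnev's point--plane incidence bound in $\Z_{p}^{3}$ (see \cite{Ru}), which is the genuinely three-dimensional ingredient that substitutes, over $\Z_{p}$, for the Szemer\'edi--Trotter theorem available over $\R$. Write $P := A \times B$ and $I := |\calI(P,\calL)|$. The first step is a second-moment reduction via Cauchy--Schwarz:
\[ I = \sum_{\ell \in \calL} |\ell \cap P| \leq |\calL|^{1/2}\Big(\sum_{\ell \in \calL} |\ell \cap P|^{2}\Big)^{1/2} =: |\calL|^{1/2}Q^{1/2}. \]
It therefore suffices to prove $Q \lesssim |A|\,|B|^{3/2}\,|\calL|^{1/2} + I$. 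Indeed, feeding this back into the displayed inequality gives $I \lesssim |\calL|^{1/2}(|A||B|^{3/2}|\calL|^{1/2})^{1/2} = |A|^{1/2}|B|^{3/4}|\calL|^{3/4}$ whenever the first term in the bound for $Q$ dominates, while in the opposite case $I \leq |\calL|^{1/2}I^{1/2}$ forces $I \lesssim |\calL|$; together these yield the asserted estimate, with the additive $|\calL|$ accounting for the diagonal.

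The quantity $Q = \sum_{\ell} |\ell \cap P|^{2}$ counts triples $(\ell,p_{1},p_{2})$ with $p_{1},p_{2} \in \ell \cap P$. The diagonal $p_{1}=p_{2}$ contributes exactly $I$. For $p_{1} \neq p_{2}$ with distinct first coordinates the line through them is unique, so it arises for at most one $\ell \in \calL$; pairs sharing a first coordinate lie on vertical lines and are disposed of separately (they contribute $\lesssim |\calL|$, or are removed by a generic affine change of coordinates). Up to these terms, $Q$ equals the number $Q'$ of pairs $((a_{1},b_{1}),(a_{2},b_{2})) \in P^{2}$ with $a_{1} \neq a_{2}$ whose connecting line lies in $\calL$. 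Parametrising a non-vertical line by its slope and intercept $(s,t)$, membership reads $b_{1} = s a_{1} + t$ and $b_{2} = s a_{2} + t$; eliminating $s$ produces the collinearity (determinant) identity
\[ a_{1}b_{2} - a_{2}b_{1} = t\,(a_{1} - a_{2}), \]
which is bilinear in the data and is precisely the shape Rudnev's method turns into a point--plane incidence. Following this scheme one constructs a point set $\calP \subset \Z_{p}^{3}$ of cardinality $\sim |B|\,|\calL|$ and a family of planes $\calH$ of cardinality $\sim |A|\,|B|$ so that $|\calI(\calP,\calH)| \approx Q'$, taking care that $\calH$ does not collapse into a pencil.

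Rudnev's theorem then bounds $|\calI(\calP,\calH)| \lesssim M^{1/2}N + kN$, where $M = \min\{|\calP|,|\calH|\}$, $N = \max\{|\calP|,|\calH|\}$, and $k$ is the largest number of points of $\calP$ on a single line of $\Z_{p}^{3}$, valid in the range $M \lesssim p^{2}$. The two hypotheses enter here in complementary roles: $|B|\,|\calL| \leq p^{2}$ keeps the point set within Rudnev's admissible range, while the Cartesian structure of $A \times B$ bounds $k$ (a line in the lift carries only $\sim \max\{|A|,|B|\}$ of the points), and $|A|^{2}|B| \leq |\calL|^{3}$ is exactly what is needed to make the principal term $M^{1/2}N$ dominate the degenerate term $kN$. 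This delivers $Q' \lesssim (|B||\calL|)^{1/2}\,|A||B| = |A|\,|B|^{3/2}\,|\calL|^{1/2}$, which is the reduction claimed above.

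The main obstacle is the construction of this three-dimensional lift: one must realise the collinearity relation as a single point--plane incidence while keeping the plane family in general position, and then bound the collinearity parameter $k$ from the product structure. The non-degeneracy is essential --- absent it, the configuration collapses to an essentially self-dual planar point--line problem and Rudnev's bound yields nothing beyond the trivial estimate. This is also precisely the step that the elementary additive-combinatorial machinery of this note cannot reproduce, which is why Theorem~\ref{main} proceeds along an entirely different path and the polynomial-free arguments obtain only the weaker bounds recorded there.
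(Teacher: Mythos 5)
First, a point of orientation: the paper does not prove this statement at all --- Theorem \ref{SdZThm} is quoted verbatim from Stevens--de Zeeuw \cite[Theorem 4]{SdZ}, and indeed the whole point of the paper's Theorem \ref{main} is to \emph{avoid} the polynomial-method machinery behind it. So there is no in-paper proof to compare against, and your proposal has to stand on its own as a proof of the Stevens--de Zeeuw theorem. Your outer framework is the right one (it is the strategy of the original source: Cauchy--Schwarz to pass to the second moment $Q=\sum_{\ell}|\ell\cap P|^{2}$, then Rudnev's point--plane bound), and the feedback logic $I\leq|\calL|^{1/2}Q^{1/2}$ together with $Q\lesssim|A||B|^{3/2}|\calL|^{1/2}+I$ would indeed yield the claimed estimate. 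But the core of the argument is missing, and the quantitative claims you attach to it are inconsistent with the conclusion.

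The genuine gap is step three: ``one constructs a point set $\calP\subset\Z_{p}^{3}$ of cardinality $\sim|B||\calL|$ and a family of planes $\calH$ of cardinality $\sim|A||B|$ so that $|\calI(\calP,\calH)|\approx Q'$'' is not a proof step --- it \emph{is} the theorem, and you never exhibit the lift, verify that incidences are counted faithfully (injectivity/multiplicity), or bound the collinearity parameter $k$. Worse, the cardinalities you posit cannot give the stated bound in general. Rudnev's main term is $(\min)^{1/2}\cdot(\max)$, so to obtain $(|B||\calL|)^{1/2}|A||B|=|A||B|^{3/2}|\calL|^{1/2}$ you need the points to be the minority, i.e.\ $|B||\calL|\leq|A||B|$, i.e.\ $|\calL|\leq|A|$. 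In the opposite regime $|\calL|>|A|$ --- which is the typical one, and exactly the one arising in this paper's application, where $|\calL|\geq|A||E|$ --- one must dualize, Rudnev gives $(|A||B|)^{1/2}|B||\calL|+k|B||\calL|$, and feeding this back through Cauchy--Schwarz produces only
\begin{displaymath} I\lesssim|\calL|^{1/2}\bigl(|A|^{1/2}|B|^{3/2}|\calL|\bigr)^{1/2}=|A|^{1/4}|B|^{3/4}|\calL|, \end{displaymath}
which is weaker than the target $|A|^{1/2}|B|^{3/4}|\calL|^{3/4}$ by the factor $(|\calL|/|A|)^{1/4}>1$. For the same reason, attaching the hypothesis $|B||\calL|\leq p^{2}$ to ``the point set within Rudnev's admissible range'' is only coherent when the points are the smaller family, since Rudnev's $p^{2}$-constraint is on the smaller of the two families. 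Two smaller defects: (i) pairs of grid points on vertical lines of $\calL$ contribute up to $\min\{|A|,|\calL|\}\,|B|^{2}$ to $Q$, not $\lesssim|\calL|$, and a ``generic affine change of coordinates'' is not available to you, because it destroys the Cartesian structure $A\times B$ on which the bound for $k$ depends --- the correct fix is to discard vertical lines at the outset, bounding their incidences by $|A||B|\leq|A|^{1/2}|B|^{3/4}|\calL|^{3/4}$ via the hypothesis $|A|^{2}|B|\leq|\calL|^{3}$; (ii) your claim that $|A|^{2}|B|\leq|\calL|^{3}$ is ``exactly'' what makes the main term dominate the $kN$ term is not checked and does not match the exponents your own scheme produces. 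As it stands, the proposal is a plausible reading of \emph{how} such a proof should go, not a proof; the missing lift and the regime $|\calL|>|A|$ must be dealt with before the argument can be accepted.
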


\begin{proof}[Proof of Proposition \ref{SdZProp}] Recall that $|B| \leq |A|$ and $|B||E| \leq p$. Assume that
\begin{displaymath} |A + tB| \leq N, \qquad t \in E, \end{displaymath}
The aim is to prove that $N \gtrsim \min\{|A|^{2/3}(|B||E|)^{1/3},|A||B|,p\}$. Consider the family of lines
\begin{displaymath} \calL_{t} := \{\{(x,y) \in \Z_{p}^{2} : x = r - ty\}\}_{r \in A + tB}. \end{displaymath}
Write also $\calL := \cup \{\calL_{t} : t \in E\}$, so that $|A||E| \leq |\calL| \leq |E|N$. Assuming that $N \leq p$, as we may, we see that the hypotheses of Theorem \ref{SdZThm} are valid:
\begin{displaymath} |A|^{2}|B| \leq |A|^{3} \leq |\calL|^{3} \quad \text{and} \quad |B||\calL| \leq |B||E|N \leq p^{2}, \end{displaymath} 
Note that every point $(a,b) \in A \times B$ is incident to $|E|$ lines in $\calL$, since
\begin{displaymath} (a,b) \in \{(x,y) \in \Z_{p}^{2} : x = (a + tb) - ty\} \in \calL_{t}, \qquad t \in E. \end{displaymath}
It follows from this, and Theorem \ref{SdZThm}, that
\begin{displaymath} |A||B||E| \leq |\calI(A \times B,\calL)| \lesssim |A|^{1/2}|B|^{3/4}(|E|N)^{3/4} + |E|N. \end{displaymath}
If the second term is larger, we have $N \gtrsim |A||B|$, and the proof is complete. If the first term is larger, re-arranging the inequality gives
\begin{displaymath} N \gtrsim |A|^{2/3}(|B||E|)^{1/3}, \end{displaymath}
as desired. The proof of Proposition \ref{SdZProp} is complete. \end{proof}


\begin{thebibliography}{1234}
\bibitem{Bo2} \textsc{J. Bourgain}: \emph{Multilinear exponential sums in prime fields under optimal entropy condition on the sources}, Geom. Funct. Anal. \textbf{18} (2009), 1477-1502
\bibitem{Bo} \textsc{J. Bourgain}: \emph{The discretized sum-product and projections theorems}, J. Anal. Math. \textbf{112} (2010), 193-236
\bibitem{Gr} \textsc{B. Green}: \emph{Sum-product phenomena in $\mathbb{F}_{p}$: a brief introduction}, arXiv:0904.2075
\bibitem{KS} \textsc{N. H. Katz and C.-Y. Shen}: \emph{A slight improvement to Garaev's sum product estimate}, Proc. Amer. Math. Soc. \textbf{136} (7) (2008), 2499-2504 
\bibitem{RNRS} \textsc{O. Roche-Newton, M. Rudnev, and I. Shkredov}: \emph{New sum-product type estimate over finite fields}, Adv. Math. \textbf{293} (2016), 589-605
\bibitem{Ru} \textsc{M. Rudnev}: \emph{On the number of incidences between points and planes in three dimensions}, Combinatorica (2017)
\bibitem{Ruz} \textsc{I. Ruzsa}: \emph{An application of graph theory to additive number theory}, Scientia, Ser. A. \textbf{3} (1989), 97-109
\bibitem{Ruz2} \textsc{I. Ruzsa}: \emph{Sumsets and structure}, In \emph{Combinatorial Number Theory and Additive Group Theory}, Springer, New York, 2009
\bibitem{SdZ} \textsc{S. Stevens and F. de Zeeuw}: \emph{An improved point-line incidence bound over arbitrary fields}, Bull. London Math. Soc. \textbf{49} (2017), 842-858
\end{thebibliography}
\end{document}